\def\@splitop#1#2\@nil{$\mathscr{#1}\!\!$\calligra#2\,\,}
\newcommand*\DeclareCursiveOperator[2]{%

  \newcommand#1{\mathop{\mbox{\@splitop#2\@nil}}\nolimits}}
\DeclareCursiveOperator{\Bnew}{B}
\DeclareCursiveOperator{\Cnew}{C}
\DeclareCursiveOperator{\Dnew}{D}
\DeclareCursiveOperator{\defe}{Def}
\theoremstyle{plain}
\newtheorem{theorem}{Theorem}[section]
\newtheorem{lemma}[theorem]{Lemma}
\newtheorem{proposition}[theorem]{Proposition}
\newtheorem{remark}[theorem]{Remark}
\theoremstyle{definition}
\numberwithin{equation}{section}
\newcommand{\A}{{\mathscr A}}
\newcommand{\F}{{\mathcal F}}
\newcommand{\G}{\mathcal{G}}
\newcommand{\el}{\mathrm{el}}
\newcommand{\R}{{\mathbb R}}
\newcommand{\N}{{\mathbb N}}
\newcommand{\Z}{{\mathbb Z}}
\newcommand{\es}{\mathrm{e}}
\newcommand{\ud}{\,\mathrm{d} }
\newcommand{\Tr}{{\mathrm{tr}}}
\newcommand{\Hp}{\mathsf{H}}
\newcommand{\e}{\varepsilon}
\newcommand{\f}{\varphi}
\newcommand{\newatop}{\genfrac{}{}{0pt}{1}}
\newcommand{\loc}{{\mathrm{loc}}}
\title
[Two slope functions minimizing fractional seminorms]{Two slope functions minimizing fractional seminorms and applications to misfit dislocations}
\author[L. De Luca]
{Lucia De Luca}
\address[L. De Luca]{Istituto per le Applicazioni del Calcolo ``M. Picone'', IAC-CNR, Roma (Italy)}
\email[L. De Luca]{lucia.deluca@cnr.it}
\author[M. Ponsiglione]
{Marcello Ponsiglione}
\address[M. Ponsiglione]{Dipartimento di Matematica ``G. Castelnuovo'', Sapienza Universit\`a di Roma, Roma (Italy)} 
\email[M. Ponsiglione]{ponsigli@mat.uniroma1.it}
\author[E. Spadaro]
{Emanuele Spadaro}
\address[E. Spadaro]{Dipartimento di Matematica ``G. Castelnuovo'', Sapienza Universit\`a di Roma, Roma (Italy)} 
\email[E. Spadaro]{spadaro@mat.uniroma1.it}
\begin{document}
\vskip .2truecm

\begin{abstract}
\small{
We consider periodic piecewise affine functions, defined on the real line, with two given slopes and prescribed length scale of the regions where the slope is negative.  We prove that, in such a class,  the minimizers of $s$-fractional Gagliardo seminorm densities, with $0<s<1$, are in fact periodic with the minimal possible period determined by the prescribed slopes and length scale.

Then, we determine the asymptotic behavior of the energy density as the ratio between the length of the two intervals where the slope is constant vanishes.

Our results, for $s=\frac 1 2$, have  relevant applications to the van der Merwe theory of misfit dislocations at semi-coherent straight interfaces. We consider  
two elastic materials having different elastic coefficients
 and casting parallel lattices having different spacing.
 As a byproduct of our analysis,
 we prove the periodicity of   optimal dislocation configurations  and
we provide  the sharp asymptotic energy density in the semi-coherent limit as the ratio between the two lattice spacings tends to one.  

\vskip .3truecm \noindent \textsc{Keywords}: Fractional Seminorms; Periodic Minimizers; Misfit Dislocations
\vskip.1truecm \noindent \textsc{Mathematics Subject Classification}: 
74N05, 
74N15, 
35R11 
}
\end{abstract}
\maketitle

{\small \tableofcontents}

\section*{Introduction}
This paper deals with the emergence of periodic patterns arising from the competition between energy functionals favoring rapid oscillations, and penalizations/constraints  fixing the length scale of such oscillations. Specifically, we prove that  minimizers of  fractional seminorms among two slope functions with suitably prescribed 
length scales are periodic with the minimal allowed period.  

Periodicity for minimizers of the local $L^2$ norm has been provided in \cite{M93} in the case where the two slopes are equal in modulus and have opposite sign, and for general (not necessarily equal in modulus) opposite slopes in \cite{RW}. The case of fractional seminorms has been considered in \cite{GM}, again for equal (opposite) slopes. Antiferromagnetic energies have been treated in \cite{LG}.

In this  paper we deal with  general two slope functions  minimizing  $s$-fractional  seminorms; this setting is relevant for modeling misfit dislocations at semi-coherent interfaces.
We settle our analysis in the most comfortable framework where the admissible functions $u$ are piecewise affine with  fixed   slopes $1$ and $-\Lambda<0$, and the length scale  $\delta>0$  is prescribed by assuming that the region where the slope is $-\Lambda$ is given by  union of intervals of length $\delta$ whose interiors are mutually disjoint.  In this respect, our framework is much more rigid and easier to be analyzed with respect to the phase field based models quoted above. 

In order to define finite energy densities, we also assume that $u$ is $T$ periodic for some $T>0$. The energy density of any admissible function satisfying the constraints above is given by the $s$-fractional Gagliardo seminorm $\F^T_s$ defined in \eqref{defF}. It is easily seen that such a functional favors rapid oscillations of $u$; prescribing only the two slopes $1$ and $ -\Lambda$,   the infimum of the energy would be zero and any minimizing sequence would converge  to zero locally uniformly. On the other hand, fixing the length scale $\delta$, we expect that minimizers are periodic functions with the minimal allowed period, which is clearly given by $(\Lambda + 1)\delta$. This is exactly the result provided by Theorem \ref{mainthm}. The proof of such a theorem does not rely on reflection positivity arguments used in \cite{GM}, that  seem to fail in our non-symmetric case; 
as well, the antiferromagnetic reflection techniques used in \cite{LG} would increase rather than decrease our fractional ferromagnetic type energies. 
Instead, our proof is based on easy first variation arguments and comparison principles. It is in this step that we take advantage of the rigid constraints on the space of our admissible functions. Our variations consist in moving to the right a $\delta$-interval $I:=(x_0-\delta, x_0)$, where $u'=-\Lambda$ and   $x_0$ is an absolute minimizer of $u$. Such a first variation yields that the average of the $s$-fractional Laplacian of $u$ on $I$ has a sign. Then, we superpose the graph of  a candidate periodic  minimizer $\bar u$ over  that of $u$ on $I$. In view of our rigid constraints, $\bar u \le u$, and exploiting fractional maximum principles we deduce that $\bar u = u$. We remark that if we replace the constraint that the region where $u'=-\Lambda$ is union of $\delta$-intervals with the weaker constraint that each of its connected components has length at least $\delta$, it would be not clear anymore that $\bar u\le u$. On the other hand, the fact that there are no constraints on the region where $u'=1$ leaves enough room to construct and compute continuous families of first variations. 

Then, we are interested in the asymptotic behavior of the energy densities as $\delta\to 0$ and $\Lambda \to +\infty$. This analysis relies on detecting the precise scales at which the $s$-fractional  seminorms (for $s \ge \frac 12$) concentrate. In this way the interaction between infinite intervals where the slope is constant can be reduced, up to lower order terms, to the interaction of a finite number of them, that could be computed explicitly (Proposition \ref{pers>}). 

Moreover, in Subsection \ref{estremi}, we develop a similar analysis for the extremal cases $s=0$ and $s=1$. 
Trivially, the $s$-fractional Gagliardo seminorms diverge as $s\to 1$; but
it is well known, since \cite{BBM}, that, after multiplying by $(1-s)$, they converge to the classical Dirichlet energy. Analogously, by \cite{MS},  the $s$-fractional Gagliardo seminorms multiplied by $s$ converge, as $s\to 0$,  to the squared $L^2$ norm. We refer the reader to \cite{CDKNP} for the corresponding $\Gamma$-convergence statements in the setting of compactly supported functions.
Here, without developing an asymptotic analysis of our functional as $s\to 0^+$ and $s\to 1^-$, we determine the minimizers of the functionals $\F^T_s$ among two slope functions for $s=0$ (corresponding somehow to the case treated in \cite{M93, RW}) and $s=1$ and we compute their asymptotic energy density.
  
In Section \ref{Sdue} we apply the results of  Section \ref{Suno} to the theory of misfit dislocations. 
Our starting point is the analysis developed by van der  Merwe in \cite{VdM}.  He considers semi-coherent straight interfaces between two  parallel square lattices with different spacing.  The two main assumptions in \cite{VdM} are the following. First, the top and bottom lattices behave as linearly elastic isotropic materials, with the same elastic moduli. Second, it is tacitly assumed, as a well understood and unanimously accepted truth (as in the celebrated Read-Shockley paper \cite{RS} for small angle grain boundaries),  that dislocations are periodically distributed along the interface. Under these assumptions,  the optimal transition profile is found using Fourier  analysis techniques and the  sharp energy density induced by such a uniform distribution of dislocations is computed. 

Here we have a less ambitious goal, that is to  find the asymptotics of the energy density in the semi-coherent limit as the ratio between the top and bottom lattice spacings tends to one.
In fact, the detection of the optimal profile only gives  lower order corrections in   the asymptotics of the energy density. This motivated us to consider a simplified model where the transition is prescribed in a simple, non optimal way; namely, we consider rigid affine transitions. The resulting difference of the top and bottom traces of the displacement at the interface is a two slope function with a ``small'' positive slope, accommodating elastically the lattice misfits, and a ``big'' negative slope, providing the transition at the core length scale of the dislocation.  It is here that  we are led   to consider the case of two general (non equal in modulus) slopes.  Then, the elastic energy density induced by the resulting trace is given, up to pre-factors, by the $\frac 12$-fractional seminorm $\F^T_{\frac 12}$ defined in \eqref{defF}, and this  leads us to consider fractional rather than local energies. Then, as a consequence of our results in Section \ref{Suno}, we prove, rather than assume, the periodicity of the optimal  distribution of  dislocations. This is our main novelty with respect to the analysis in \cite{VdM} and, to the best of our knowledge, it is the first time that periodicity of optimal distributions of dislocations is proved in a simple but rigorous mathematical framework. 
A further step is that we deal with two half-planes having different elastic moduli, providing an explicit energy density depending on all the elastic moduli and on the lattice spacings of the two square lattices \eqref{gfvdm}. 

We highlight that a related model for semi-coherent interfaces has been introduced and analyzed by $\Gamma$-convergence in \cite{FPS}; there, only the asymptotic (in the semi-coherent limit) uniform distribution of dislocations is proved. Our analysis, up to minor adaptations,  provides the periodicity of optimal configurations of dislocations also for that model, completing the analysis in \cite{FPS}. 

Finally, let us mention that it may be worth to consider less rigid models based on phase field approximations such as those  in \cite{LG, GM, M93, RW}. Such problems, together with possible further developments, are discussed in Section \ref{Stre}.  

\vskip5pt
{\textsc Acknowledgements: LdL is a member of the Gruppo Nazionale per l'Analisi Matematica, la Probabilit\`a e le loro Applicazioni (GNAMPA) of the Istituto Nazionale di Alta Matematica (INdAM). E.~S.~has been supported by the ERC-STG grant 759229 {\sc HiCoS}. The authors are grateful to S. Daneri, M. Morini and E. Runa for interesting  discussions on the subject of this paper.}
\section{Periodicity of two slope functions with minimal fractional seminorms}\label{Suno}
In this section we introduce the fractional seminorm  energies and the class of admissible functions we will deal with. We will prove the periodicity of the minimizers and provide sharp energy densities.

\subsection{Periodicity of minimizers}  
Let $\Lambda,\, \delta > 0$. Let 
\begin{equation}\label{formT}
T=L (\Lambda +1) \delta \qquad \text{ for some } L\in \N \, . 
\end{equation}
We introduce the class of admissible functions $\A^{T,\Lambda,\delta}$ defined as
\begin{multline}\label{defA}
\A^{T,\Lambda,\delta}:=\Big\{u\in C^0(\R) \,:\,u\textrm{ is } T\textrm{-periodic and piecewise affine},\quad u'\in\{1,-\Lambda\}  \textrm{ a.e.},\\
(u')^{-1}(\{-\Lambda\})=\bigcup_{k\in\N}I^k
\textrm{ for some intervals }  I^k \textrm{ with } 
\\
\sharp (I^i \cap I^j) \le 1   \textrm{ for all } i\neq j, \,  |I^k|=\delta \textrm{ for all } k\in\N\},
\end{multline}
where $\sharp$ denotes the cardinality of a set and $|\cdot|$ is the  one dimensional Lebesgue measure.
Notice that  $L=1$ corresponds to the minimal possible period $T=(\Lambda +1)\delta$, namely, to functions alternating $\delta$-intervals where $u'= -\Lambda$ with $\Lambda\delta$-intervals where $u'=1$. 
We denote by  $\bar u$ one of such  $(\Lambda +1) \delta$-periodic functions; specifically, the one  that  on the interval $(-\delta,\Lambda \delta]$ is defined by 
\begin{equation}\label{defmin}
\bar u(x):= 
\begin{cases}
-\Lambda x & \textrm{if } -\delta < x<0,\\
x & \textrm{if } 0 \le x \le \Lambda \delta.
\end{cases}
\end{equation}
Trivially, $\bar u$ is also $L (\Lambda +1) \delta$ periodic for all $L\in\N$, so that  $\A^{T,\Lambda,\delta}\neq \emptyset$ for all choices of $T$ as in \eqref{formT}.


For every $0<s<1$ we consider the functional $\F^{T}_s :\A^{T,\Lambda,\delta}\to \R^+$ defined by
\begin{equation}\label{defF}
\F^{T}_s(u):=\frac{1}{2T}\int_{0}^T\ud x\int_{\R}\frac{|u(x)-u(y)|^2}{|x-y|^{1+2s}}\ud y.
\end{equation}

For every $u\in \A^{T,\Lambda,\delta}$,   the $s$-fractional Laplacian $(-\Delta)^s u$ of $u$ can be defined a.e. (in fact, everywhere except for the jump points of $u'$) as
\begin{equation*}
(-\Delta)^s u (x):=2\lim_{r\to 0^+}\int_{\R\setminus(x-r,x+r)}\frac{u(x)- u(y)}{|x-y|^{1+2s}}\ud y.
\end{equation*}
We notice that for any $u\in \A^{T,\Lambda,\delta}$ we have that $\F^{T}_s(u)$ is finite and that
$(-\Delta)^s u\in L^1_{\loc}(\R)$.

Let $u\in \A^{T,\Lambda,\delta}$ and let $x_0$ be a global minimizer of $u$. 
We set
$I=(x_0-\delta, x_0)$ if  $x_0-\delta$ is a jump point for $u'$, while we set $I=[x_0-\delta, x_0)$ if 
$u'$ exists  at $x_0-\delta$. Clearly $u'=-\Lambda$ on $I$.
Possible competitors for $u$ are given by   functions $u_h$ satisfying (for $h>0$ small enough)  
$$
(u_h')^{-1}(\{-\Lambda\})= ((u')^{-1}(\{-\Lambda\}) \setminus {\{I + kT, \, k\in\Z\}}) \cup 
{\{(x_0 - \delta, x_0) + h + kT, \, k\in\Z\}}\, .
$$
Such  competitors  can be clearly written by means of  additive variations as $u_h= u+h \varphi_h$ for some functions $\varphi_h$ satisfying $\varphi_h \to (\Lambda +1) \chi_{\{I + kT, \, k\in\Z\}}$ strongly in $L^1_{\loc}(\R)$ and weakly star in $L^\infty(\R)$  as $h\to 0^+$.
\begin{lemma}\label{L1}
Let $u\in \A^{T,\Lambda,\delta}$ and let, for $h>0$ small enough, 
$u_h= u+h \varphi_h \in \A^{T,\Lambda,\delta}$  for some functions $\varphi_h$ satisfying 
\begin{equation}\label{ffih}
\varphi_h \to (\Lambda +1) \chi_{\{I + kT, \, k\in\Z\}}
\end{equation}
strongly in $L^1_{\loc}(\R)$ and weakly star in $L^\infty(\R)$ as $h\to 0^+$, for some interval $I$. Then,
$$
\lim_{h\to 0} \frac{\F^{T}_s(u_h) - \F^{T}_s(u)}{h} = \frac{\Lambda +1}{T} \int_I (-\Delta)^s u(x) \ud x. 
$$
\end{lemma}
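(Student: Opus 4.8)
The plan is to treat $\F^{T}_s$ as a quadratic form and compute its first variation directly. Introduce the symmetric bilinear form
\[
\mathcal{B}(v,w):=\frac{1}{2T}\int_{0}^{T}\ud x\int_{\R}\frac{(v(x)-v(y))(w(x)-w(y))}{|x-y|^{1+2s}}\ud y,
\]
so that $\F^{T}_s(v)=\mathcal{B}(v,v)$. For $v,w\in\A^{T,\Lambda,\delta}$ this double integral converges absolutely: both functions are Lipschitz, so the integrand is $O(|x-y|^{1-2s})$ near the diagonal, and both are bounded, so it is $O(|x-y|^{-1-2s})$ at infinity, both integrable for $0<s<1$. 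Writing $u_h=u+h\varphi_h$ and expanding by bilinearity gives the exact identity
\[
\frac{\F^{T}_s(u_h)-\F^{T}_s(u)}{h}=2\,\mathcal{B}(u,\varphi_h)+h\,\mathcal{B}(\varphi_h,\varphi_h).
\]
The statement then splits into three tasks: express $2\mathcal{B}(u,\varphi_h)$ through $(-\Delta)^s u$, pass to the limit in this linear term, and show the quadratic remainder is infinitesimal.

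The first task is the integration-by-parts identity $2\mathcal{B}(u,\varphi_h)=\frac{1}{T}\int_0^T\varphi_h(x)(-\Delta)^s u(x)\ud x$. Since the two ``split'' integrals are not absolutely convergent when $s\ge\frac12$, I would first truncate the kernel to $|x-y|>r$, obtaining $\mathcal{B}_r$. For fixed $r$ each piece is absolutely convergent; periodizing the inner $\R$-integral (writing $y=\eta+kT$ with $\eta\in[0,T)$ and using the $T$-periodicity of $u,\varphi_h$ together with the evenness and translation invariance of the kernel) shows the two pieces coincide, yielding
\[
\mathcal{B}_r(u,\varphi_h)=\frac{1}{T}\int_0^T\varphi_h(x)\,L_r u(x)\ud x,\qquad L_r u(x):=\int_{\R\setminus(x-r,x+r)}\frac{u(x)-u(y)}{|x-y|^{1+2s}}\ud y .
\]
Letting $r\to0^+$, the left-hand side converges to $\mathcal{B}(u,\varphi_h)$ by dominated convergence (absolute convergence of $\mathcal{B}$), while $2L_r u\to(-\Delta)^s u$; identifying the two limits gives the claimed identity.

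Given this, the second task is immediate: $\varphi_h$ is bounded in $L^\infty$ (a weakly-$\star$ convergent sequence is norm bounded) and $(-\Delta)^s u\in L^1([0,T])$, so by the convergence \eqref{ffih} and the $T$-periodicity of $(-\Delta)^s u$,
\[
2\mathcal{B}(u,\varphi_h)=\frac1T\int_0^T\varphi_h\,(-\Delta)^s u\ \longrightarrow\ \frac{\Lambda+1}{T}\int_0^T\chi_{\{I+kT,\,k\in\Z\}}(-\Delta)^s u=\frac{\Lambda+1}{T}\int_I(-\Delta)^s u .
\]
For the remainder, note $h\varphi_h=u_h-u$ is piecewise affine with derivative in $\{0,\pm(\Lambda+1)\}$, so $\varphi_h$ is Lipschitz with constant $O(1/h)$, constant away from ramps of width $O(h)$ located, per period, near $\partial I$. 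A scaling estimate on these ramps gives $\mathcal{B}(\varphi_h,\varphi_h)=O(h^{1-2s})$, whence $h\,\mathcal{B}(\varphi_h,\varphi_h)=O(h^{2-2s})\to0$ for every $s<1$.

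I expect the genuine difficulty to be the passage $r\to0^+$ in the first task when $s\ge\frac12$: there the separate integrals diverge, so one must dominate $L_r u$ uniformly in $r$ by an $L^1_{\loc}$ function near the finitely many jump points of $u'$ in order to identify $\lim_r\frac1T\int\varphi_h L_r u$ with $\frac{1}{2T}\int\varphi_h(-\Delta)^s u$. The remaining ingredients — the periodization bookkeeping and the ramp scaling estimate for the remainder — are routine.
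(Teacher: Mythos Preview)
Your proposal is correct and follows essentially the same route as the paper: expand the quadratic form $\F^T_s$ bilinearly, use periodization (the paper's Fubini plus the change of variable $x'=x-kT$, $y'=y-kT$) to rewrite the cross term as $\frac{1}{T}\int_0^T\varphi_h\,(-\Delta)^s u$, then pass to the limit in $h$ using \eqref{ffih} and dispose of the quadratic remainder. One small remark: your scaling claim $\mathcal{B}(\varphi_h,\varphi_h)=O(h^{1-2s})$ is accurate only for $s\ge\tfrac12$ (for $s<\tfrac12$ the quantity stays bounded, and at $s=\tfrac12$ it is logarithmic), but in every case $h\,\mathcal{B}(\varphi_h,\varphi_h)\to0$, so the conclusion is unaffected; the paper simply records this as ``easy estimates yield $hT\F^T_s(\varphi_h)\to0$''. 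You are in fact more careful than the paper about the $r\to0^+$ passage, which the paper writes down directly without justifying the interchange of limits.
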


\begin{proof}
For every $h>0$ we have
\begin{equation}\label{computfirstvar}
\begin{aligned}
\frac{T\F^T_s(u+h\f_h)-T\F^T_s(u)}{h}=&hT\F^T_s(\f_h)+\int_{0}^T\ud x\int_{\R}\frac{(u(x)-u(y))(\f_h(x)-\f_h(y))}{|x-y|^{1+2s}}\ud y\\
=&hT\F^T_s(\f_h)+\int_{0}^T\f_h(x)\frac{1}{2}(-\Delta)^su(x)\ud x\\
&+\lim_{r\to 0^+}\int_{0}^T\ud x\int_{\R\setminus(x-r,x+r)}\f_h(y)\frac{u(y)-u(x)}{|x-y|^{1+2s}}\ud y\\
=&hT\F^T_s(\f_h)+\int_{0}^T\f_h(x)(-\Delta)^su(x)\ud x,
\end{aligned}
\end{equation}
where, in the last equality,  we have used Fubini Theorem and the change of variable $x'=x-kT$ and $y'=y-kT$ to deduce that
\begin{equation*}
\begin{aligned}
&\int_{0}^T\ud x\int_{\R\setminus(x-r,x+r)}\f_h(y)\frac{u(y)-u(x)}{|x-y|^{1+2s}}\ud y\\
=&\sum_{k\in\Z}\int_{0}^T\ud x\int_{(kT,(k+1)T)\setminus(x-r,x+r)}\f_h(y)\frac{u(y)-u(x)}{|x-y|^{1+2s}}\ud y\\
=&\sum_{k\in\Z}\int_0^T\ud y' \f_h(y') \int_{(-kT,(1-k)T)\setminus (y'-r,y'+r)}\frac{u(y')-u(x')}{|x'-y'|^{1+2s}}\ud x'\\
=&\int_{0}^T\ud y'\f_h(y')\int_{\R\setminus(y'-r,y'+r)}\frac{u(y')-u(x')}{|x'-y'|^{1+2s}}\ud x'. 
\end{aligned}
\end{equation*}
Furthermore, easy  estimates yield  $hT\F^T_s(\f_h) \to 0$ as $h\to 0^+$. Therefore, 
by taking the limit as $h\to 0^+$ in \eqref{computfirstvar} and using \eqref{ffih} we get the claim.
\end{proof}

Now we observe that the fractional Laplacian of the function $\bar u$ defined in \eqref{defmin} has zero average on the $\delta$-intervals with negative slope. In fact, since the map  $y\mapsto \bar u(y - \frac{\delta}{2}) - \bar u (-\frac{\delta}{2})$ is an odd function,  by an easy symmetry argument (namely, by a change of variable),  the following result holds true.
\begin{lemma}\label{L2}
The function $\bar u$ defined by \eqref{defmin} satisfies
$$
 \int_{-\delta}^0 (-\Delta)^s \bar u(x) \ud x=0.
$$
\end{lemma}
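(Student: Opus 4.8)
The plan is to exploit the reflection symmetry of $\bar u$ about the midpoint of the negative-slope interval $(-\delta,0)$, reducing the statement to the elementary fact that the fractional Laplacian of an odd function is odd, and hence integrates to zero on an interval symmetric about the origin.

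First I would set $c:=\bar u(-\tfrac{\delta}{2})=\tfrac{\Lambda\delta}{2}$ and define $v(y):=\bar u(y-\tfrac{\delta}{2})-c$, so that $\bar u(x)=v(x+\tfrac{\delta}{2})+c$. The point is to check that $v$ is odd, i.e. that the graph of $\bar u$ is point-symmetric about $(-\tfrac{\delta}{2},c)$; concretely, that $\bar u(-\tfrac{\delta}{2}+t)+\bar u(-\tfrac{\delta}{2}-t)=\Lambda\delta$ for every $t$. For $0\le t\le\tfrac{\delta}{2}$ both arguments lie in $(-\delta,0)$, where $\bar u$ has slope $-\Lambda$, and the two linear contributions cancel, giving $\Lambda\delta$. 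For $\tfrac{\delta}{2}<t\le\Lambda\delta+\tfrac{\delta}{2}$ the argument $-\tfrac{\delta}{2}+t$ lies in the slope-$1$ interval $(0,\Lambda\delta]$, while $-\tfrac{\delta}{2}-t$, after adding one period $(\Lambda+1)\delta$, becomes $(\Lambda+\tfrac12)\delta-t\in[0,\Lambda\delta]$, again in the slope-$1$ branch; the two contributions then sum to $\Lambda\delta$ as well. By $(\Lambda+1)\delta$-periodicity of $\bar u$ this extends to all $t$, so $v$ is odd.

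Next, since $(-\Delta)^s$ is translation invariant and annihilates constants (indeed $u(x)-u(y)=0$ for constant $u$ in the defining principal-value integral), I obtain $(-\Delta)^s\bar u(x)=[(-\Delta)^s v](x+\tfrac{\delta}{2})$. The substitution $y\mapsto -y$ in the principal-value integral shows that $(-\Delta)^s$ maps odd functions to odd functions: replacing $x$ by $-x$ and $y$ by $-y$ flips the sign of $v(x)-v(y)$ while leaving $|x-y|$ and the (symmetric) excised neighborhood unchanged. Hence $(-\Delta)^s v$ is odd.

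Finally I would change variables $z=x+\tfrac{\delta}{2}$ to conclude
\[
\int_{-\delta}^{0}(-\Delta)^s\bar u(x)\,\ud x=\int_{-\delta/2}^{\delta/2}[(-\Delta)^s v](z)\,\ud z=0,
\]
the last integral vanishing because its integrand is odd and the domain is symmetric about $0$. The only delicate point—the main, if mild, obstacle—is the verification of the oddness of $v$ across the period boundary, where periodicity of $\bar u$ must be invoked to match the slope-$1$ branch against the reflected slope-$(-\Lambda)$ branch; once this symmetry is in hand, the rest is routine.
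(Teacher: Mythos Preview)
Your proof is correct and takes essentially the same approach as the paper: the paper observes that $y\mapsto \bar u(y-\tfrac{\delta}{2})-\bar u(-\tfrac{\delta}{2})$ is odd and then appeals to ``an easy symmetry argument (namely, by a change of variable)'', which is precisely the translation $z=x+\tfrac{\delta}{2}$ together with the oddness of $(-\Delta)^s v$ that you spell out. You have simply supplied the details (the case check for oddness across the period boundary, the translation invariance and annihilation of constants) that the paper leaves implicit.
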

We are in a position to provide the main result of the paper, establishing the $(\Lambda + 1)\delta$-periodicity of the minimizers of  $\F^{T}_s$ in $\A^{T,\Lambda,\delta}$.  In the following results we will always assume $T$ of the form \eqref{formT}, the specific value of $L$  being irrelevant. In this respect, for instance, next theorem establishes that $L$ can be chosen equal to one. 
 
\begin{theorem}\label{mainthm}
For every $0<s<1$, the minimizers of $\F^{T}_s$ in $\A^{T,\Lambda,\delta}$ are the functions $\bar u(\cdot - x_0) + y_0$, with $x_0,\,y_0\in\R$, where $\bar u$ is defined in \eqref{defmin}.
\end{theorem}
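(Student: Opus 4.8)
The plan is to show that every minimizer $u$ coincides with a translate of $\bar u$; combined with the translation invariance of $\F^{T}_s$ and the existence of a minimizer (by the direct method: the configuration of the $L$ intervals $I^k$, taken modulo translations, ranges in a compact set on which $\F^{T}_s$ is continuous), this gives the statement, since all translates of $\bar u$ then share the same minimal energy. So let $u$ be a minimizer, $x_0$ a global minimum point, and $I=(x_0-\delta,x_0)$ as above. First I would extract a sign from the first variation: moving $I$ slightly to the right produces the admissible competitors $u_h$ constructed before Lemma \ref{L1} (the behaviour $u'=1$ to the right of the global minimum $x_0$ leaves room), so by minimality and Lemma \ref{L1},
\[
0\le \lim_{h\to 0^+}\frac{\F^{T}_s(u_h)-\F^{T}_s(u)}{h}=\frac{\Lambda+1}{T}\int_I(-\Delta)^s u\,\ud x .
\]
Next I set $v:=\bar u(\cdot-x_0)+u(x_0)$, the translate of $\bar u$ whose descending $\delta$-interval is exactly $I$ and which matches $u$ at $x_0$; since both functions have slope $-\Lambda$ on $I$ and agree at $x_0$, we have $u=v$ on $I$. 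By Lemma \ref{L2}, the translation invariance of $(-\Delta)^s$, and the fact that constants are annihilated, $\int_I(-\Delta)^s v\,\ud x=0$, whence $w:=u-v$, which vanishes on $I$, satisfies $\int_I(-\Delta)^s w\,\ud x\ge 0$.

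The heart of the matter, and the step I expect to be the main obstacle, is the geometric comparison $w\ge0$ on all of $\R$, i.e. $u\ge v$; this is exactly where the rigid constraint $|I^k|=\delta$ must enter (with components merely of length $\ge\delta$ the inequality can fail). I would prove it in three steps. First, after any global minimum $z$ of $u$ one has $u'=1$ on $(z,z+\Lambda\delta)$: otherwise a $\delta$-interval would start at some $z+\tau$ with $\tau<\Lambda\delta$ and, descending by \emph{exactly} $\Lambda\delta$, would force $u(z+\tau+\delta)<u(z)=\min u$. Second, between two consecutive global minima the height balance $(\text{rise})=\Lambda\,(\text{fall})$ together with $(\text{fall})=(\text{number of }\delta\text{-intervals})\cdot\delta$ forces their distance to be an integer multiple of $(\Lambda+1)\delta$; hence all global minima of $u$ lie on the lattice $x_0+(\Lambda+1)\delta\,\Z$, which is precisely the set of valleys of $v$. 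Third, writing $\phi:=w$, on intervals where $v$ increases one has $\phi'=u'-1\le0$ and where $v$ decreases $\phi'=u'+\Lambda\ge0$, so $\phi$ attains its minimum over a period at a peak of $v$ (equivalently, up to the flat pieces of $\phi$, at a valley of $u$). To see $\phi\ge0$ there I relate the point to the nearest lattice minimum $z_\ell=x_0+k(\Lambda+1)\delta$ of $u$ to its left: setting $s$ for the distance to $z_\ell$, the constraint $u\ge\min u$ caps the number of $\delta$-intervals of $u$ in between by $\lfloor s/((\Lambda+1)\delta)\rfloor$, which is exactly the number of descents of $v$ on that stretch, yielding $u-\min u\ge\bar u(\cdot-x_0)$ pointwise.

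Finally I would close with a fractional maximum principle. Since $w=0$ on $I$, for $x\in I$ the principal value is not needed and
\[
(-\Delta)^s w(x)=-2\int_{\R}\frac{w(y)}{|x-y|^{1+2s}}\,\ud y\le0,
\]
because $w\ge0$. Integrating over $I$ gives $\int_I(-\Delta)^s w\,\ud x\le0$, which combined with the first-variation inequality forces $\int_I(-\Delta)^s w\,\ud x=0$, so $(-\Delta)^s w=0$ a.e. on $I$. As the kernel is strictly positive, $\int_{\R}\frac{w(y)}{|x-y|^{1+2s}}\,\ud y=0$ together with $w\ge0$ gives $w\equiv0$ a.e., hence $w\equiv0$ by continuity. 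Therefore $u=v=\bar u(\cdot-x_0)+u(x_0)$, so every minimizer is of the form $\bar u(\cdot-x_0)+y_0$, which is the claim.
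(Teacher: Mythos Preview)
Your proof is correct and follows essentially the same strategy as the paper's: first variation via Lemma~\ref{L1} to get $\int_I(-\Delta)^s u\ge 0$, superposition with the translate $v$ of $\bar u$, Lemma~\ref{L2}, and the fractional comparison principle to force $u\equiv v$. The only substantive difference is that the paper simply asserts the pointwise inequality $v\le u$ (``In view of our rigid constraints, $\bar u\le u$''), whereas you spell out a three-step justification; your argument there is sound (the parenthetical ``at a valley of $u$'' is imprecise but immaterial, and Step~2 is not strictly needed since one may take $z_\ell=x_0$ throughout).
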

\begin{proof}
Let $u$ be a minimizer of $\F^{T}_s$ in $\A^{T,\Lambda,\delta}$ and let $x_0$ be a global minimizer of $ u$. 
By minimality and by Lemma \ref{L1} we have 
\begin{equation}\label{eq1}
\int_{x_0 - \delta}^{x_0} (-\Delta)^s u(x) \ud x \ge 0. 
\end{equation}
Set $v(\cdot):= \bar u(\cdot-x_0) + u(x_0)$. Then $v\le u$ with $v\equiv u$ on $(-\delta,0)$. By Lemma \ref{L2} and \eqref{eq1} it follows that
$$
0= \int_{x_0 - \delta}^{x_0} (-\Delta)^s v(x) \ud x \ge  \int_{x_0 - \delta}^{x_0} (-\Delta)^s u(x) \ud x \ge 0. 
$$
Then, all the inequalities above are in fact equalities, from which we conclude $u\equiv v$.
\end{proof}

\subsection{Fractional energy densities}\label{calcoloden}
In Proposition \ref{pers<} and Theorem \ref{pers>} below we provide  the asymptotic behavior of the minimal energy density $\F^T_s$  in   $\A^{T,\Lambda,\delta}$  as $\delta\to 0$, $\Lambda\to +\infty$ with $\Lambda \delta=1$. This last condition is not restrictive and  it only fixes  the  oscillation of the admissible functions. In fact, exploiting the homogeneities of  $\F^T_s$, given any $u \in \A^{T,\Lambda,\delta}$ and $\eta>0$, the function $u^\eta$ defined by $u^\eta(x):=  \eta u(\frac{x}{\eta})$  belongs to $\A^{\eta T,\Lambda,\eta \delta}$, its oscillation is $\eta$ times the oscillation of $u$ and 
$\F^{\eta T}_s(u^\eta)= \eta^{2-2s} \F^T_s(u)$.

We first consider the easy case $0< s< \frac 1 2$, which follows by the  dominated convergence theorem. 
\begin{proposition}\label{pers<}
Let $0< s< \frac 1 2$.
Let moreover $\{\delta_n\}_{n\in \N}$ be a positive vanishing sequence (as $n\to +\infty$); for all $n\in\N$, set $\Lambda_n:=\frac{1}{\delta_n}$, let $T_n$ be of the form \eqref{formT} (with $\delta$ replaced by $\delta_n$ and $\Lambda$ replaced by $\Lambda_n$)  and  let  
$u_n$ be a minimizer of $\F^{T_n}_{s}$ in $\A^{T_n,\Lambda_n,\delta_n}$. Then
$$
\lim_{n\to +\infty}  \F^{T_n}_{s}(u_n) = \frac 12 \int_{0}^1\int_\R \frac{|w(x)-w(y)|^2}{|x-y|^{1+2s}} \ud y \ud x ,
$$
where $w(x):= x - \lfloor x \rfloor$ is the so called mantissa function.
\end{proposition}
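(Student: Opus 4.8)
The plan is to use Theorem \ref{mainthm} to replace each minimizer by the explicit sawtooth of minimal period, and then pass to the limit by two applications of dominated convergence; the entire point is that for $s<\frac12$ the kernel singularity is mild enough that the discontinuous limit $w$ has finite energy. By Theorem \ref{mainthm} every minimizer $u_n$ is a translate $\bar u(\cdot-x_0)+y_0$ of the $(\Lambda_n+1)\delta_n$-periodic function $\bar u$ of \eqref{defmin} built with $\Lambda_n,\delta_n$; call it $\bar u_n$. Since $\F^{T_n}_s$ depends only on differences $u(x)-u(y)$ and is an average over one period, it is invariant under vertical and horizontal translations and independent of the multiple $L$, so $\F^{T_n}_s(u_n)=\F^{1+\delta_n}_{s}(\bar u_n)$. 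Because $\Lambda_n\delta_n=1$ the period is $1+\delta_n$, one has $\bar u_n(x)=x=w(x)$ on the rise $[0,1]$ for every $n$, while $\bar u_n$ drops from $1$ to $0$ with slope $-\Lambda_n$ on the intervals $(-\delta_n,0)+k(1+\delta_n)$. Writing $\Phi_n(x):=\int_\R|\bar u_n(x)-\bar u_n(y)|^2|x-y|^{-1-2s}\,dy$ and $\Phi(x):=\int_\R|w(x)-w(y)|^2|x-y|^{-1-2s}\,dy$, the goal becomes $\frac{1}{2(1+\delta_n)}\int_0^{1+\delta_n}\Phi_n\to\frac12\int_0^1\Phi$.

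The key observation is a clean domination of $\Phi_n$ on $(0,1)$, uniform in $n$, obtained by splitting the $y$-integral over $(-\infty,0)$, $[0,1]$ and $(1,\infty)$. On the middle piece $\bar u_n(y)=y$ for all $n$, so $|\bar u_n(x)-\bar u_n(y)|=|x-y|$ and the integrand equals $|x-y|^{1-2s}$, whose integral is a finite constant. On the outer pieces I only use $|\bar u_n(x)-\bar u_n(y)|\le1$ together with $x-y\ge x$ for $y<0$ and $y-x\ge1-x$ for $y>1$, giving $\int_{-\infty}^0\le\frac{1}{2s}x^{-2s}$ and $\int_1^{\infty}\le\frac{1}{2s}(1-x)^{-2s}$. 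Hence $\Phi_n(x)\le \frac{1}{2s}\big(x^{-2s}+(1-x)^{-2s}\big)+C$ for every $x\in(0,1)$ and every $n$, a bound that is integrable on $(0,1)$ \emph{precisely because} $2s<1$. For the pointwise limit, note that for fixed non-integer $y$ and large $n$ one has $y$ in a rise interval and $\bar u_n(y)=y-k(1+\delta_n)\to w(y)$, so $\bar u_n\to w$ a.e.; fixing $x\in(0,1)$, the inner integrand converges a.e. in $y$ and is dominated (by $|x-y|^{1-2s}$ near $x$ and by $|x-y|^{-1-2s}$ away from $[0,1]$), whence $\Phi_n(x)\to\Phi(x)$ for every $x\in(0,1)$. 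The uniform bound then lets me apply dominated convergence in $x$ to conclude $\int_0^1\Phi_n\to\int_0^1\Phi$.

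It remains to control the drop remainder $\int_1^{1+\delta_n}\Phi_n$ and the prefactor. For $x$ in a drop interval the slope is $-\Lambda_n$, so the near-diagonal contribution is $\lesssim\Lambda_n^2\int_0^{\delta_n}t^{1-2s}\,dt\sim\delta_n^{-2s}$, while the interactions with the two adjacent rises contribute $\lesssim(x-1)^{-2s}+(1+\delta_n-x)^{-2s}$; integrating over the interval of length $\delta_n$ yields $\int_1^{1+\delta_n}\Phi_n\lesssim\delta_n^{1-2s}\to0$, again using $2s<1$. Combining $\int_0^1\Phi_n\to\int_0^1\Phi$, the vanishing of the remainder, and $\frac{1}{1+\delta_n}\to1$ gives exactly $\frac12\int_0^1\Phi=\frac12\int_0^1\!\int_\R\frac{|w(x)-w(y)|^2}{|x-y|^{1+2s}}\,dy\,dx$.

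The only genuinely delicate point, which I expect to be the main obstacle, is the uniform integrable domination of $\Phi_n$ across a jump of $w$: one must verify that the near-singular interaction across a discontinuity is $O(\mathrm{dist}(x,\Z)^{-2s})$ and that inserting the finite-slope drop of width $\delta_n$ does not worsen this estimate — indeed the three-region splitting shows the drop width only helps. Everything hinges on the threshold $s<\frac12$, which is exactly the range in which $\Phi$ is integrable near the integers and the discontinuous profile $w$ has finite energy; this is why the statement is restricted to $0<s<\frac12$ and why the companion case $s\ge\frac12$ requires the separate analysis of Theorem \ref{pers>}.
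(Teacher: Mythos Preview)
Your proof is correct and follows precisely the approach the paper indicates: the paper states only that the result ``follows by the dominated convergence theorem'' and gives no further argument, so your write-up supplies exactly the details the authors omit. The reduction via Theorem~\ref{mainthm} to the explicit $(1+\delta_n)$-periodic sawtooth, the identification $\bar u_n\equiv w$ on $(0,1)$, the uniform bound $\Phi_n(x)\le \tfrac{1}{2s}\big(x^{-2s}+(1-x)^{-2s}\big)+C$ obtained from $|\bar u_n|\le 1$ on the tails, and the estimate $\int_1^{1+\delta_n}\Phi_n\lesssim\delta_n^{1-2s}$ for the drop remainder are all sound, and each step uses $2s<1$ in exactly the way you highlight. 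The closing paragraph is commentary rather than a missing step; the argument is complete without it.
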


Now we focus on  the case $\frac 1 2\le s<1$; in view of our application to misfit dislocations, particularly relevant will be the critical case $s=\frac 12$.

\begin{theorem}\label{pers>}
Let $\frac 1 2\le s<1$.
Let moreover $\{\delta_n\}_{n\in \N}$ be a positive vanishing sequence (as $n\to +\infty$); for all $n\in\N$, set $\Lambda_n:=\frac{1}{\delta_n}$, let $T_n$ be of the form \eqref{formT} (with $\delta$ replaced by $\delta_n$ and $\Lambda$ replaced by $\Lambda_n$)  and  let  
$u_n$ be a minimizer of $\F^{T_n}_{s}$ in $\A^{T_n,\Lambda_n,\delta_n}$. Then
$$
\lim_{n\to +\infty} \frac{1}{\sigma_n} \F^{T_n}_{s}(u_n) = 1,
$$
where 
\begin{equation}
\sigma_n:=\left\{\begin{array}{ll}
\displaystyle  \log\frac{1}{\delta_n}&\textrm{if }s=\frac 12\\ 
\displaystyle \frac{\delta_n^{1-2s}}{2s(1-s)(2s-1)(3-2s)}&\textrm{if }\frac 1 2<s<1.
\end{array}\right.
\end{equation}
\end{theorem}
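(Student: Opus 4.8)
The plan is to invoke Theorem~\ref{mainthm} to replace $u_n$ by the explicit profile $\bar u$ and then extract the leading order of its seminorm. Since $\Lambda_n\delta_n=1$, the minimal period is $P_n:=(\Lambda_n+1)\delta_n=1+\delta_n$, and by Theorem~\ref{mainthm} every minimizer $u_n$ equals $\bar u(\cdot-x_0)+y_0$ for some $x_0,y_0$; as $\F^{T_n}_s$ depends only on the differences $\bar u(x)-\bar u(y)$ and is invariant under horizontal translations (preserving $T_n$-periodicity) and additive constants, we get $\F^{T_n}_s(u_n)=\F^{T_n}_s(\bar u)$. Because the inner integral in \eqref{defF} is $P_n$-periodic in $x$, this value is independent of $L$ and equals
\[
\F^{T_n}_s(\bar u)=\frac{1}{2P_n}\int_0^{P_n}\ud x\int_\R\frac{|\bar u(x)-\bar u(y)|^2}{|x-y|^{1+2s}}\,\ud y .
\]
Everything thus reduces to the asymptotics of this quantity, keeping in mind that $\bar u'$ takes the values $1$ on the rising intervals of length $1$ and $-1/\delta_n$ on the cores of length $\delta_n$, and has zero average over a period.

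For $\tfrac12<s<1$ I would rewrite the seminorm in terms of $\bar u'$. Using $|\bar u(x)-\bar u(y)|^2=\int_y^x\!\int_y^x\bar u'(t)\bar u'(\tau)\,\ud t\,\ud\tau$ and Fubini (the integral of $|x-y|^{-1-2s}$ over all pairs straddling two fixed points converges precisely because $2s-1>0$), together with the periodicity manipulation already used in the proof of Lemma~\ref{L1}, one obtains
\[
\int_0^{P_n}\ud x\int_\R\frac{|\bar u(x)-\bar u(y)|^2}{|x-y|^{1+2s}}\,\ud y=\frac{1}{s(2s-1)}\int_0^{P_n}\ud t\int_\R \bar u'(t)\,\bar u'(\tau)\,|t-\tau|^{1-2s}\,\ud\tau ,
\]
an identity that can be checked on Fourier series, both sides being multiples of $\sum_k|\xi_k|^{2s}|c_k|^2$ for the Fourier coefficients $c_k$ of $\bar u$ and frequencies $\xi_k=2\pi k/P_n$. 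The divergent part is the self-correlation of a single core, on which $\bar u'\equiv-1/\delta_n$; rescaling the core to the unit interval gives
\[
\frac{1}{\delta_n^2}\int_0^{\delta_n}\!\!\int_0^{\delta_n}|t-\tau|^{1-2s}\,\ud t\,\ud\tau=\delta_n^{1-2s}\int_0^1\!\!\int_0^1|a-b|^{1-2s}\,\ud a\,\ud b=\frac{\delta_n^{1-2s}}{(1-s)(3-2s)} .
\]
Inserting the prefactor $\frac{1}{2P_n}\,\frac{1}{s(2s-1)}$ and letting $P_n\to1$ reproduces exactly $\sigma_n=\frac{\delta_n^{1-2s}}{2s(1-s)(2s-1)(3-2s)}$.

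It then remains to prove that all remaining interactions contribute only $O(1)$, hence $o(\sigma_n)$ since $\delta_n^{1-2s}\to+\infty$. Writing $g(t):=\int_\R\bar u'(\tau)|t-\tau|^{1-2s}\,\ud\tau$, the core self-correlation is $\int_{\mathrm{core}}\bar u'(t)\,g_{\mathrm{self}}(t)\,\ud t$, while the complementary part $\int_0^{P_n}\bar u'(t)\,[g(t)-g_{\mathrm{self}}(t)]\,\ud t$ is $O(1)$: the near interactions are controlled by the local integrability of $|t-\tau|^{1-2s}$ for $s<1$, and the far field is finite only after exploiting $\int_{\mathrm{period}}\bar u'=0$ (integrate by parts, using that $\bar u$ is bounded and $\partial_\tau|t-\tau|^{1-2s}\sim|\tau|^{-2s}$ is integrable at infinity for $s>\tfrac12$); in particular distinct cores must be grouped with the intervening rising intervals and cannot be estimated separately. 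The borderline case $s=\tfrac12$ requires a direct argument, since the identity degenerates (the constant $\frac{1}{s(2s-1)}$ blows up while $|t-\tau|^{1-2s}\equiv1$ integrates against $\bar u'$ to $(\int_{\mathrm{period}}\bar u')^2=0$, and the core self-interaction is now merely $O(1)$). Here the logarithm comes from pairs $(x,y)$ lying in rising intervals on opposite sides of a single core, for which $|x-y|=:r$ ranges over $\delta_n\lesssim r\lesssim1$ and $\bar u(x)-\bar u(y)\approx r-1$; since the measure of such pairs at separation $r$ is of order $r$, their contribution is of order $\int_{\delta_n}^{1}\frac{(r-1)^2}{r^2}\,r\,\ud r=\int_{\delta_n}^1\frac{(r-1)^2}{r}\,\ud r\sim\log\frac{1}{\delta_n}$, with coefficient exactly matching $\sigma_n=\log(1/\delta_n)$ (consistently with the $s\to\tfrac12^+$ limit of the case above).

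I expect the main obstacle to be the rigorous estimate of the remainder. The derivative representation involves a kernel $|t-\tau|^{1-2s}$ that is not integrable at infinity, so the non-self-core contributions converge only by virtue of the exact cancellation $\int_{\mathrm{period}}\bar u'=0$, and must be bounded globally rather than piece by piece; and the critical exponent $s=\tfrac12$, where this representation is unavailable, demands a separate and delicate extraction of the logarithmic rate and of its precise constant.
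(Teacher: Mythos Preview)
Your approach for $\tfrac12<s<1$ is correct and genuinely different from the paper's. The paper proceeds by a direct spatial decomposition: it breaks the double integral over one period $(-\delta_n,1)\times\R$ into ten pieces $\mathscr I_1^n,\dots,\mathscr I_{10}^n$ according to which elementary intervals (core, adjacent rising, next rising, far field) the two variables belong to, and computes the leading order of each piece by hand. For $s>\tfrac12$ there are in fact \emph{several} diverging pieces---core--core self ($\mathscr I_1^n$), core--adjacent rising ($\mathscr I_2^n=\mathscr I_3^n=\mathscr I_5^n=\mathscr I_7^n$), and rising--rising across a core ($\mathscr I_8^n=\mathscr I_9^n$)---whose coefficients combine to $\sigma_n$ only through a nontrivial algebraic identity. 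Your derivative representation collapses all of this into a single leading contribution (the core self-correlation of $\bar u'$), which is conceptually cleaner and explains the shape of the constant $\sigma_n$ at a glance. The price is that the remainder estimate is more delicate: the kernel $|t-\tau|^{1-2s}$ is not integrable at infinity, so the ``rest'' converges only after exploiting $\int_{\text{period}}\bar u'=0$ globally, and the near/far splitting has to be done carefully (the naive bound $\|\bar u'\|_\infty=1/\delta_n$ for the local part would be too crude---you need that the bad $L^\infty$ bound occurs only on an interval of length $\delta_n$). Both approaches are valid; the paper's is more pedestrian but entirely self-contained.

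For $s=\tfrac12$ your sketch identifies the same mechanism as the paper: the logarithm is produced by pairs $(x,y)$ in rising intervals separated by a single core (the paper's $\mathscr I_8^n+\mathscr I_9^n$). What is missing from your proposal, and what the paper does explicitly, is the verification that the other eight pieces are $O(1)$; for instance, the core--core self term $\mathscr I_1^n$ is now bounded precisely because the factor $\Lambda_n^2=\delta_n^{-2}$ is exactly cancelled by $\delta_n^{3-2s}\big|_{s=1/2}=\delta_n^2$. Your heuristic $\int_{\delta_n}^1\frac{(r-1)^2}{r}\,\ud r\sim\log\frac{1}{\delta_n}$ is correct in spirit, but turning it into a two-sided bound with the sharp constant---as the paper does by sandwiching $\mathscr I_8^n$ between explicit integrals over a shrinking window $(1-\rho,1)\times(1+\delta_n,1+\delta_n+\rho)$ and the full product of rising intervals, then letting $\rho\to 0$---still needs to be written out. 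As you yourself flag, this is where the real work lies.
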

\begin{proof}
In view of  Theorem \ref{mainthm}, up to translations we may assume that the minimizers  $u_n$ are the functions defined by \eqref{defmin} for $\Lambda=\Lambda_n$ and $\delta=\delta_n$.
We start by observing that, since $T_n=L(1+\delta_n)$ with $L\in\N$,
\begin{equation}\label{primacosa}
(1+\delta_n) \F^{T_n}_s(u_n)=\frac 1 2\int_{-\delta_n}^{1}\ud x\int_{\R}\frac{|u_n(x)-u_n(y)|^2}{|x-y|^{1+2s}}\ud y.
\end{equation}
For every pair of measurable sets $(I,J)$, we define 
$$
\mathcal I^n_s(I,J):=\frac 1 2\int_{I}\ud x\int_{J}\frac{|u_n(x)-u_n(y)|^2}{|x-y|^{1+2s}}\ud y.
$$
Then, by \eqref{primacosa}, we have
\begin{equation}\label{secondacosa}
\begin{aligned}
(1+\delta_n) \F^{T}_s(u_n)=&\mathcal I^n_s((-\delta_n,0),(-\delta_n,0))+\mathcal I^n_s((-\delta_n,0),(0,1)) \\
&+\mathcal I^n_s((-\delta_n,0),(-1-\delta_n,-\delta_n)) +\mathcal I^n_s((-\delta_n,0),\R\setminus(-1-\delta_n,1))\\
&+\mathcal I^n_s(0,1),(-\delta_n,0))+\mathcal I^n_s((0,1),(0,1))+\mathcal I^n_s((0,1),(1,1+\delta_n))\\
&+
\mathcal I^n_s((0,1),(1+\delta_n,2+\delta_n))+\mathcal I^n_s((0,1),(-1-\delta_n,-\delta_n))\\
&+\mathcal I^n_s((0,1),\R\setminus(-1-\delta_n,2+\delta_n))\\
=:&\mathscr{I}^n_{1}+\mathscr{I}^n_{2}+\mathscr{I}^n_{3}+\mathscr{I}^n_{4}+\mathscr{I}^n_{5}+\mathscr{I}^n_{6}+\mathscr{I}^n_{7}+\mathscr{I}^n_{8}+\mathscr{I}^n_{9}+\mathscr{I}^n_{10}.
\end{aligned}
\end{equation}
We first discuss the case $s=\frac 1 2$. 
To this end, we observe that, for $n$ large enough,
\begin{equation}\label{terzacosa}
\mathscr{I}^n_{k}\le 1\qquad\textrm{for }k\neq 8,9.
\end{equation}
Now, by periodicity, 
\begin{equation}\label{terzacosa1}
\mathscr{I}^n_{8}=\mathscr{I}^n_{9}.
\end{equation}
For every $0<\rho<\frac 1 2$ we have that
\begin{equation*}
\begin{aligned}
\mathscr{I}^n_{8}\ge &\frac 1 2(1-2\rho)^2\int_{1-\rho}^1\ud x\int_{1+\delta_n}^{1+\delta_n+\rho}\frac{1}{(y-x)^2}\ud y\\
=&\frac 1 2(1-2\rho)^2\Big(\log\frac{1}{\delta_n}+\log\frac{(\delta_n+\rho)^2}{\delta_n+2\rho}\Big),
\end{aligned}
\end{equation*}
whence, dividing by $\sigma_n$, sending first $n\to +\infty$ and then $\rho\to 0$, we obtain
\begin{equation}\label{contoprinc20}
\liminf_{n\to +\infty}\frac{1}{\sigma_n}\mathscr{I}^n_{8}\ge \frac 1 2.
\end{equation}
Moreover,  we have that
\begin{equation*}
\mathscr{I}^n_{8}\le\frac 1 2\int_{0}^{1}\ud x\int_{1+\delta_n}^{2+\delta_n}\frac{1}{(y-x)^{2}}\ud y
=\frac 1 2\Big(\log\frac{1}{\delta_n}+\log\frac{(\delta_n+1)^2}{\delta_n+2}\Big),
\end{equation*}
whence we obtain
\begin{equation}\label{contoprinc200}
\limsup_{n\to +\infty}\frac{1}{\sigma_n}\mathscr{I}^n_{8}\le \frac 1 2.
\end{equation}
By \eqref{primacosa}, \eqref{secondacosa}, \eqref{terzacosa}, \eqref{terzacosa1}, \eqref{contoprinc20} and \eqref{contoprinc200}, we obtain the claim for $s=\frac 1 2$.
\vskip5pt
We pass to the case $s>\frac 12 $. 
We first notice that, for $n$ large enough, 
\begin{equation}\label{limitati}
\mathscr{I}_4^n,\mathscr{I}_{10}^n\le 1;
\end{equation}
moreover, 
\begin{equation}\label{vamalecons}
\mathscr{I}_6^n=\frac 1 2\int_0^1\ud x\int_0^1|x-y|^{1-2s}\ud y
=\frac{1}{2(1-s)(3-2s)}.\\
\end{equation}
Now, as for $\mathscr{I}^n_1$, we have
\begin{equation}\label{Iuno}
\mathscr{I}^n_1=\frac{\Lambda_n^2}{2}\int_{-\delta_n}^{0}\ud x\int_{-\delta_n}^{0}|x-y|^{1-2s}\ud y
=\frac{\delta_n^{1-2s}}{2(1-s)(3-2s)}.
\end{equation}
As for $\mathscr{I}^n_2$, we claim that
\begin{equation}\label{Idueclaim}
\lim_{n\to +\infty}\frac{1}{\delta_n^{1-2s}}\mathscr{I}^n_{2}= \frac{1}{4s(3-2s)}.
\end{equation}
To this purpose,
we write
\begin{equation}\label{Idue}
\begin{aligned}
\mathscr{I}^n_2=&\frac 1 2\int_{-\delta_n}^{0}\ud x\int_{0}^{1}\frac{|\Lambda_n x+y|^2}{|x-y|^{1+2s}}\ud y
\\
=&\frac 1 2\int_{-\delta_n}^{0}\ud x\int_{0}^{1}\frac{|(\Lambda_n +1)x+y-x|^2}{|x-y|^{1+2s}}\ud y
\\
=&\frac 1 2\int_{-\delta_n}^{0}\ud x\int_{0}^{1}\frac{(y-x)^2}{(y-x)^{1+2s}}\ud y\\
&+\frac 1 2 \int_{-\delta_n}^{0}\ud x\int_{0}^{1}\frac{(\Lambda_n+1)^2x^2}{(y-x)^{1+2s}}\ud y\\
&+\int_{-\delta_n}^{0}\ud x\int_{0}^{1}\frac{(y-x)(\Lambda_n+1)x}{(y-x)^{1+2s}}\ud y\\
=:&\mathscr{I}^n_{2,1}+\mathscr{I}^n_{2,2}+\mathscr{I}^n_{2,3}.
\end{aligned}
\end{equation}
Straightforward computations yield
\begin{equation}\label{Idueuno}
\mathscr{I}^n_{2,1}\le \frac{\delta_n}{4(1-s)}.
\end{equation}
Moreover,
 \begin{equation*}
 \begin{aligned}
\mathscr{I}^n_{2,2}=&\frac{(\Lambda_n+1)^2}{4s}\int_{-\delta_n}^{0}(-x)^{2-2s}\ud x -\frac{(\Lambda_n+1)^2}{4s}\int_{-\delta_n}^{0}x^2(1-x)^{-2s}\ud x\\
=&\frac{1}{4s(3-2s)}(\Lambda_n+1)^2\delta_n^{3-2s}-\frac{(\Lambda_n+1)^2}{4s}\int_{-\delta_n}^{0}x^2(1-x)^{-2s}\ud x;
\end{aligned}
\end{equation*}
therefore,
using that
\begin{equation*}
\int_{-\delta_n}^{0}x^2(1-x)^{-2s}\ud x\le\delta^3_n, 
\end{equation*}
we deduce
\begin{equation}\label{Iduedue}
\lim_{n\to +\infty}\frac{1}{\delta_n^{1-2s}}\mathscr{I}^n_{2,2}= \frac{1}{4s(3-2s)}.
\end{equation}
Furthermore, using \eqref{Idueuno} and \eqref{Iduedue}, by H\"older inequality we deduce that
 \begin{equation}\label{Iduetre}
\lim_{n\to +\infty}\frac{1}{\delta_n^{1-2s}}\mathscr{I}^n_{2,3}=0\, .
\end{equation}
Therefore, the claim \eqref{Idueclaim} follows by \eqref{Idue}, \eqref{Idueuno}, \eqref{Iduedue}, and \eqref{Iduetre}.

By easy reflection/symmetry arguments, for every $n\in\N$,
\begin{equation}\label{quelliuguali}
\mathscr{I}^n_{2}=\mathscr{I}^n_{3}=\mathscr{I}^n_{5}=\mathscr{I}^n_{7}\qquad\qquad\textrm{and}\qquad\qquad \mathscr{I}^n_{8}=\mathscr{I}^n_{9}.
\end{equation} 
Therefore, it remains to compute
\begin{equation*}
\lim_{n\to +\infty}\frac{1}{\delta_n^{1-2s}}\mathscr{I}^n_{8}.
\end{equation*}
To this end, we notice that, for every $0<\rho<\frac 1 2$,
\begin{equation}\label{contoprinc1}
\begin{aligned}
\mathscr{I}^n_{8}
\ge&\frac 1 2(1-2\rho)^2\int_{1-\rho}^1\ud x\int_{1+\delta_n}^{1+\delta_n+\rho}\frac{1}{(y-x)^{1+2s}}\ud y\\
=&\frac{1}{4s(2s-1)}(1-2\rho)^2\Big(\delta_n^{1-2s}+(\delta_n+2\rho)^{1-2s}-2(\delta_n+\rho)^{1-2s}\Big),
\end{aligned}
\end{equation}
whence, sending first $n\to +\infty$ and then $\rho\to 0$, we deduce
\begin{equation}\label{contoprinc100}
\liminf_{n\to +\infty}\frac {1}{\delta^{1-2s}_n}\mathscr{I}^n_{8}\ge \frac{1}{4s(2s-1)}.
\end{equation}
Moreover,  we have that
\begin{equation*}
\mathscr{I}^n_{8}\le\frac 1 2\int_{0}^{1}\ud x\int_{1+\delta_n}^{2+\delta_n}\frac{1}{(y-x)^{1+2s}}\ud y
=
\frac{1}{4s(2s-1)}\Big(\delta_n^{1-2s}+(\delta_n+2)^{1-2s}-2(\delta_n+1)^{1-2s}\Big),
\end{equation*}
whence we obtain
\begin{equation}\label{contoprinc1000}
\limsup_{n\to +\infty}\frac{1}{\delta_n^{1-2s}}\mathscr{I}^n_{8}\le \frac {1}{4s(2s-1)}.
\end{equation}
By \eqref{contoprinc100} and \eqref{contoprinc1000}, we get
\begin{equation}\label{limiteIotto}
\lim_{n\to +\infty}\frac{1}{\delta_n^{1-2s}}\mathscr{I}^n_{8}= \frac {1}{4s(2s-1)}.
\end{equation}
Finally, by \eqref{secondacosa}, \eqref{limitati}, \eqref{vamalecons}, \eqref{Iuno}, \eqref{Idueclaim}, \eqref{quelliuguali}, and \eqref{limiteIotto}, we get the claim.
\end{proof}



\subsection{The extremal cases $s=1$ and $s=0$}\label{estremi}
Here we discuss the extremal cases  $s=1$ and $s=0$.
As for $s=1$,  we are led to consider   \cite{BBM, MS, CDKNP} the energy functional
\begin{equation}\label{defFs1}
\F^{T}_1(u):=\frac{1}{2T}\int_{0}^T |  u'(x)|^2 \ud x .
\end{equation} 
The functional $\F^{T}_1$ is constant on $\A^{T,\Lambda,\delta}$, and hence every $u\in \A^{T,\Lambda,\delta}$ is a minimizer of $\F^{T}_1$ in  $\A^{T,\Lambda,\delta}$ . We also notice that, if in the definition of  $\A^{T,\Lambda,\delta}$ we replace the condition that $u$ is $T$-periodic with the $T$-periodicity of $u'$, then the  minimizers of $\F^{T}_1$ become  the functions $u(x) = x+c, \, c\in \R$ if $1\le \Lambda$, 
 $u(x) = - \Lambda x+c, \, c\in \R$ if $1\ge  \Lambda$.
 
 The case $s=0$ corresponds  \cite{MS, CDKNP} to the functional

\begin{equation}\label{defFs0}
\F^{T}_0(u):=\frac{1}{2T}\int_{0}^T   |u(x)|^2 \ud x  .
\end{equation}

The periodicity of the minimizers of $\F^T_0$ has been proven in \cite{M93} for $\Lambda=1$ (and extended in \cite{RW} to all $\Lambda>0$), in a less rigid setting, namely, replacing the condition $u'\in\{\pm 1\}$ with a Modica-Mortola penalization, whose parameter $\e>0$  implicitly fixes the transition scale ($\delta$, in our model). Theorem \ref{s0} below concerns with the periodicity of the minimizers of $\F^T_0$ in $\A^{T,\Lambda,\delta}$.

\begin{theorem}\label{s0}
The minimizers of $\F^{T}_0$ in $\A^{T,\Lambda,\delta}$ are the functions $\bar u(\cdot - x_0) - \frac{\Lambda \delta}{2}$, with $x_0\in\R$.
\end{theorem}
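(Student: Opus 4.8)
The plan is to mirror the scheme of Theorem~\ref{mainthm}, with the $s$-fractional Laplacian replaced by the identity — which is precisely the operator appearing in the first variation of the local functional $\F^{T}_0(u)=\frac1{2T}\int_0^T|u|^2\,\ud x$ — together with one extra ingredient that fixes the \emph{vertical} position of the minimizer. Indeed, the key difference from the fractional case is that $\F^{T}_0$ is \emph{not} invariant under the vertical translations $u\mapsto u+c$, which is why the statement allows only horizontal shifts $\bar u(\cdot-x_0)$ and prescribes the height $-\frac{\Lambda\delta}{2}$. Since $u+c\in\A^{T,\Lambda,\delta}$ for every $c\in\R$ and $\F^{T}_0(u+c)$ is a quadratic in $c$ minimized at $c=-\frac1T\int_0^Tu\,\ud x$, the first observation is that every minimizer $u$ must satisfy $\int_0^T u\,\ud x=0$.

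Next I would establish the $s=0$ analogue of Lemma~\ref{L1}. Using the same competitors $u_h=u+h\varphi_h$ that move the interval $I=(x_0-\delta,x_0)$ to the right, the elementary expansion $u_h^2=u^2+2h\,u\varphi_h+h^2\varphi_h^2$ together with \eqref{ffih} gives
\[
\lim_{h\to0^+}\frac{\F^{T}_0(u_h)-\F^{T}_0(u)}{h}=\frac{\Lambda+1}{T}\int_I u(x)\,\ud x .
\]
Taking $x_0$ to be a global minimizer of a minimizer $u$ and $I=(x_0-\delta,x_0)$ the $\delta$-interval of negative slope ending at $x_0$ (admissible as a perturbation for small $h>0$, since $u$ increases to the right of $x_0$), minimality forces $\int_I u\,\ud x\ge0$.

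I would then import the comparison step verbatim from the proof of Theorem~\ref{mainthm}: setting $m:=u(x_0)=\min u$ and $v:=\bar u(\cdot-x_0)+m$, the purely geometric facts $v\le u$ and $v\equiv u$ on $I$ hold independently of $s$. A direct computation (the $s=0$ counterpart of Lemma~\ref{L2}) gives $\int_{-\delta}^0\bar u=\frac{\Lambda\delta^2}{2}$ and $\frac{1}{(\Lambda+1)\delta}\int_{-\delta}^{\Lambda\delta}\bar u=\frac{\Lambda\delta}{2}$, so that $\bar u-\frac{\Lambda\delta}{2}$ has zero average on both $(-\delta,0)$ and on a full period. Because $v\equiv u$ on $I$, the first-variation inequality becomes $0\le\int_I u=\int_I v=\frac{\Lambda\delta^2}{2}+m\delta$, i.e.\ $m\ge-\frac{\Lambda\delta}{2}$; while $v\le u$ and $\int_0^T u=0$ give $m+\frac{\Lambda\delta}{2}=\frac1T\int_0^T v\le\frac1T\int_0^T u=0$, i.e.\ $m\le-\frac{\Lambda\delta}{2}$. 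Squeezing yields $m=-\frac{\Lambda\delta}{2}$, hence $\int_0^T v=\int_0^T u=0$; combined with $u-v\ge0$ this forces $\int_0^T(u-v)=0$ with nonnegative integrand, so $u\equiv v=\bar u(\cdot-x_0)-\frac{\Lambda\delta}{2}$, as claimed.

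The main point — and the only place where the argument genuinely departs from the fractional one — is that the nonlocal maximum principle that did the heavy lifting in Theorem~\ref{mainthm} here degenerates to a triviality ($v=u$ on $I$ simply gives $\int_I v=\int_I u$). Consequently the real content shifts to determining the \emph{height} of the minimizer, and the delicate step is to pin $\min u=-\frac{\Lambda\delta}{2}$ by squeezing it between the lower bound coming from the first variation and the upper bound coming from the pointwise comparison $v\le u$ under the zero-average normalization. Everything else, in particular the geometric inequality $v\le u$ and the admissibility of the one-sided perturbations, is inherited unchanged from Section~\ref{Suno}.
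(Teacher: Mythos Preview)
Your proof is correct, but it takes a genuinely different route from the paper's.

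The paper does \emph{not} mirror Theorem~\ref{mainthm}. Instead it fixes an arbitrary connected component $(x_1,x_2)$ of $(u')^{-1}(\{-\Lambda\})$ and uses \emph{two} one-sided first variations: shifting the rightmost $\delta$-subinterval $(x_2-\delta,x_2)$ to the right gives $\int_{x_2-\delta}^{x_2}u\ge0$, while shifting the leftmost $\delta$-subinterval $(x_1,x_1+\delta)$ to the left gives $\int_{x_1}^{x_1+\delta}u\le0$. Since $u$ is strictly decreasing on $(x_1,x_2)$ one has $\int_{x_1}^{x_1+\delta}u\ge\int_{x_2-\delta}^{x_2}u$, so the chain $0\le\int_{x_2-\delta}^{x_2}u\le\int_{x_1}^{x_1+\delta}u\le0$ collapses: the component is a single $\delta$-interval with $u(x_1)=-u(x_2)=\tfrac{\Lambda\delta}{2}$. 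This holds for every component, which forces the stated periodic structure; the zero average and the height $-\tfrac{\Lambda\delta}{2}$ fall out automatically, and the comparison function $v$ is never invoked.

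Your approach, by contrast, stays faithful to the template of Theorem~\ref{mainthm}: a single first variation at the global minimum, plus the pointwise comparison $v\le u$, with the missing rigidity supplied by the zero-average normalization. The trade-off is this: the paper's argument is entirely local to each component and avoids relying on the geometric inequality $v\le u$ (which is true and is used in the proof of Theorem~\ref{mainthm}, but which the paper never spells out); your argument is more unified with the fractional case and makes transparent why the loss of nonlocality forces one to fix the height separately, at the price of importing the global comparison $v\le u$ and the extra normalization step.
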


\begin{proof}
Let $u$ be a minimizer of $\F^{T}_s$ in $\A^{T,\Lambda,\delta}$ and let $(x_1,x_2)$ be a connected component of $(u')^{-1}(\{-\Lambda\})$. 
Let, for $h>0$ small enough, 
$u_h= u+h \varphi_h \in \A^{T,\Lambda,\delta}$  for some functions $\varphi_h$ satisfying $\varphi_h \to (\Lambda +1)  \chi_{\{(x_2-\delta,x_2)) + kT, \, k\in\Z\}}$ strongly in $L^1_{\loc}(\R)$ as $h\to 0^+$. Then, by minimality of $u$ we have
$$
0\le T \lim_{h\to 0} \frac{\F^{T}_0(u_h) - \F^{T}_0(u)}{h} = (\Lambda +1) \int_{x_2-\delta}^{x_2}  u(x) \ud x. 
$$
Analogously, we have 
$$
0\le  - (\Lambda +1) \int_{x_1}^{x_1+\delta}  u(x) \ud x. 
$$
We deduce that
$$
0 \le \int_{x_2-\delta}^{x_2}  u(x) \ud x \le \int_{x_1}^{x_1+\delta}  u(x) \ud x \le 0.
$$
Therefore, $x_2=x_1+\delta$ and $u(x_2)= - u(x_1)$, and such relations hold true  for the end points of  all the connected components of $(u')^{-1}(\{-\Lambda\})$. This is equivalent to the fact that $u = \bar u(\cdot - x_0) - \frac{\Lambda \delta}{2}$ for some  $x_0\in\R$. 
\end{proof}

Finally, in the following proposition we determine the asymptotic behavior of the minimal energy $\F^T_s$ for $s=0,1$.
\begin{proposition}
Let  $\{\delta_n\}_{n\in \N}$ be a positive vanishing sequence (as $n\to +\infty$); for all $n\in\N$, set $\Lambda_n:=\frac{1}{\delta_n}$, let $T_n$ be of the form \eqref{formT} (with $\delta$ replaced by $\delta_n$ and $\Lambda$ replaced by $\Lambda_n$)  and  let  
$u_n$ be a minimizer of $\F^{T_n}_{0}$ in $\A^{T_n,\Lambda_n,\delta_n}$. Then
$$
\lim_{n\to +\infty}  \F^{T_n}_{0}(u_n) = \frac{1}{24}.
$$
Moreover, 
let  
$u_n$ be a minimizer of $\F^{T_n}_{1}$ in $\A^{T_n,\Lambda_n,\delta_n}$. Then
$$
 \lim_{n\to +\infty} \delta_n  \F^{T_n}_{1}(u_n) = \frac 12.
$$
\end{proposition}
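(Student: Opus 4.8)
The plan is to exploit the fact that in both extremal cases the minimizers are already known \emph{explicitly}, so that the asymptotics reduce to a single elementary computation; indeed, thanks to the normalization $\Lambda_n\delta_n=1$, both quantities turn out to be constant in $n$, so that the claimed limits are attained exactly rather than merely asymptotically. Throughout I would average over the minimal period $(\Lambda_n+1)\delta_n$, which is legitimate since the relevant minimizers are $(\Lambda_n+1)\delta_n$-periodic and $T_n$ is an integer multiple of it.

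I would first dispose of $s=1$. Since $u_n'\in\{1,-\Lambda_n\}$ almost everywhere, the integrand $|u_n'|^2$ takes only the values $1$ and $\Lambda_n^2$, and on a minimal period the region where $u_n'=-\Lambda_n$ has length $\delta_n$ while the region where $u_n'=1$ has length $\Lambda_n\delta_n$. Hence
$$
\int_0^{(\Lambda_n+1)\delta_n}|u_n'(x)|^2\ud x=\Lambda_n^2\delta_n+\Lambda_n\delta_n=\Lambda_n\delta_n(\Lambda_n+1),
$$
so that $\F^{T_n}_1(u_n)=\frac{\Lambda_n}{2}$ (independently of the particular minimizer, consistently with $\F^{T}_1$ being constant on $\A^{T,\Lambda,\delta}$). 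Multiplying by $\delta_n$ and using $\Lambda_n\delta_n=1$ gives $\delta_n\F^{T_n}_1(u_n)=\frac{\Lambda_n\delta_n}{2}=\frac12$ for every $n$, whence the limit.

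For $s=0$ I would invoke Theorem \ref{s0}, which identifies the minimizer as the centered sawtooth $\bar u(\cdot-x_0)-\frac{\Lambda_n\delta_n}{2}$, whose $L^2$-average over a period does not depend on the translation $x_0$. On the period $(-\delta_n,\Lambda_n\delta_n]$ this function equals $-\Lambda_n x-\frac{\Lambda_n\delta_n}{2}$ on $(-\delta_n,0)$ and $x-\frac{\Lambda_n\delta_n}{2}$ on $[0,\Lambda_n\delta_n]$; after the substitution centering each affine piece about its own midpoint, both contributions become integrals of $t^2$ over symmetric intervals, yielding $\frac{\Lambda_n^2\delta_n^3}{12}$ and $\frac{\Lambda_n^3\delta_n^3}{12}$ respectively. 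Summing and dividing by $2(\Lambda_n+1)\delta_n$ gives
$$
\F^{T_n}_0(u_n)=\frac{\Lambda_n^2\delta_n^2}{24}=\frac{(\Lambda_n\delta_n)^2}{24}=\frac{1}{24},
$$
again exactly for every $n$.

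There is essentially no genuine obstacle here: once the explicit minimizers are in hand, the statement is a matter of two elementary integrations. The only points requiring a little care are the bookkeeping of the lengths of the two slope regions within a single period and the observation that the period-averaging may be performed over the minimal period. The normalization $\Lambda_n\delta_n=1$ then collapses the $\delta_n$-dependence entirely, which is precisely why the two asserted limits hold as exact identities for all $n$.
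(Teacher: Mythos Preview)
Your argument is correct. The paper does not actually supply a proof for this proposition; it is stated without proof, presumably because the computation is elementary once the minimizers are known explicitly (from Theorem~\ref{s0} for $s=0$ and from the observation that $\F^T_1$ is constant on $\A^{T,\Lambda,\delta}$ for $s=1$). Your proposal fills in precisely this computation, and the two integrations you carry out are accurate: $\F^{T_n}_1(u_n)=\Lambda_n/2$ and $\F^{T_n}_0(u_n)=\Lambda_n^2\delta_n^2/24$, both of which reduce to the stated constants under $\Lambda_n\delta_n=1$. Your remark that these identities hold exactly for every $n$, not merely in the limit, is also correct and worth making.
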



\section{van der Merwe formula for misfit dislocations between two half-planes with different elastic coefficients}\label{Sdue}
In this section we provide the energy density induced by misfit dislocations at semi-coherent straight interfaces  separating two elastic materials with different elastic moduli and filled with square lattices having the same orientation but different lattice spacing. 
\subsection{Elastic energy  on the half-plane}
Here we introduce well known facts about half-plane isotropic linearized elasticity, focussing on the energy induced by a given  datum on the boundary of the half-plane.

Let $G>0$ and $-\frac 12 <\nu <1$ denote the shear modulus and the Poisson ratio of the elastic material occupying the half plane $\Hp^+:= \R \times \R^+ $, respectively. Given $U\in H^1(\Hp^+;\R^2)$ we denote by $\es(U):= \frac 12 (\nabla U + \nabla^{\mathrm T} U)$ the symmetric gradient of $U$;  the corresponding isotropic elastic energy is given by 
$$
E^{\el}(U):= \frac 12 \int_{\Hp^+}  \frac{2G \nu}{1-2\nu} |\Tr (\es(U))|^2 +  2 G |\es(U)|^2 \ud \mathsf{x}= 
\frac 12 \int_{\Hp^+} \sigma(U): \es(U) \ud\mathsf{x}
$$
where $\sigma(U):=  \frac{2G \nu}{1-2\nu} \Tr(\es(U)) \mathrm{Id} + 2G \es(U)$ is the stress associated to $U$. 

Set $\Hp^0:= \R \times \{0\}$. Given $u\in H^{\frac 12} (\Hp^0)$ we denote by $U$ the elastic extension of $(u,0)$ on $\Hp^+$,  namely the function $U$ minimizing $E^{\el}(U)$ among all $H^1$-functions whose trace at $\Hp^0$ equals to $(u,0)$. Equivalently, $U$ solves 
\begin{equation}\label{ee}
\left\{
\begin{array}{ll}
\mathrm{Div} \, \sigma(U)=0&\textrm{in }\Hp^+\\
U=(u,0)&\textrm{on }\Hp^0.
\end{array}
\right.
\end{equation}

Denoting by $x$ and $x_2$ the horizontal and the vertical coordinates of $\mathsf{x} \in \R^2$, respectively,
integration by parts yields
\begin{equation}\label{ibp}
E^{\el}(U) = \frac 12 \int_{\Hp^0} u(x) \cdot  \sigma_{12}(U)  \ud x,
\end{equation}
where   $\sigma_{12}(U) $ at $\Hp^0$ belongs to  $H^{-\frac 12}$, so that its  product with $u$ is meant  in the sense of duality between $H^{\frac 12}$ and $H^{- \frac 12}$. 
The map $u\mapsto \sigma_{12}(U)$ is a Dirichlet-to-Neumann map; for $u$ regular enough it can be explicitly computed by means of Fourier analysis methods and it is given by (see \cite[Lemma 2.3]{GLLX}) 
$$
\sigma_{12}(U)  (x):= -\frac{G}{(1-\nu) \pi} \lim_{r\to 0^+} \int_{\R\setminus (x-r,x+r)} \frac{u(x)-u(y)}{|x-y|^2} \ud y \qquad \text{ for all } x\in\R. 
$$
By \eqref{ibp} we deduce that for all $u$ smooth enough, and in fact, by an easy density argument, for all $u\in H^{\frac 12} (\Hp^0)$, 
\begin{equation}\label{ff}
E^{\el}(U) = \frac{G}{4\pi(1-\nu)}  \int_\R  \ud x \int_\R \frac{|u(x)- u(y)|^2}{|x-y|^2} \ud y \, .
\end{equation}

Assume now that $u\in H^{\frac 12}_{\loc}(\R)$ is $T$-periodic. Let $V$ be the unique minimizer of the elastic energy 
\begin{equation}\label{ff3}
E^{\el}(V; (0,T)\times \R^+) := \frac 12 \int_{(0,T)\times \R^+} \sigma(V): \es(V) \ud \mathsf{x},
\end{equation}
among functions $V\in H^1((0,T)\times \R^+;\R^2)$ with $V=(u,0)$ on $(0,T)\times \{ 0 \}$ and with $ V(0,\cdot)= V(T,\cdot)$ in the sense of traces. By minimality $V$ is traction free on the vertical  half lines $\{0\}\times \R^+$ and $\{T\}\times \R^+$. 
Let moreover $U$ be the  $T e_1$-periodic extension of $V$, namely such that 
$U(x,x_2) = U(x+T,x_2)$ for almost all  $x\in\R,\, x_2\in\R^+$. Then, $U$ solves the elasticity equations 
\eqref{ee}. Furthermore, by \eqref{ff} and by standard cut-off arguments it follows that 

\begin{equation}\label{ff2}
E^{\el}(U; (0,T)\times \R^+) =
 \frac{G}{2\pi (1-\nu)}T\F^T_{\frac 1 2}(u),
\end{equation}
where $\F^T_{\frac 1 2}$ is the functional defined in \eqref{defF}.

\subsection{van der Merwe formula} 
Let $\Hp^\pm:= \R\times \R^\pm$ and assume that $\Hp^\pm$ behave like linearly elastic isotropic materials with elastic coefficients $(G^\pm,\nu^\pm)$. From a microscopic perspective, assume that $\Hp^\pm$ are filled by square lattices with lattice spacing given by $c^\pm$ with $c^-\ge c^+>0$, and set $c:=\frac{c^+ + c^-}{2}$.

Let $U^\pm\in H^1_{\loc}(\Hp^\pm)$ be elastic displacements defined on $\Hp^\pm$ with trace on $\Hp^0:= \R \times \{0\}$ having only horizontal component,   denoted by $u^\pm \in H^{\frac 12}_{\loc}(\Hp^0)$.

 We start by describing the structural assumptions of our model.  We assume that $\Hp^0$ is divided into two kinds of regions: an elastic one, where the top and bottom layer perfectly match their lattices through uniform elastic deformations, and a plastic region, where  the lattice misfits are compensated by the emergence of edge dislocations. 
We also assume that on each of these regions the deformation gradient of $u^\pm$  are constants, denoted by $m^\pm_{\mathrm e}$ and $m^\pm_{\mathrm p}$ on the elastic and plastic regions, respectively; these constants  will be determined by energy minimization principles, together with linearization arguments based on the assumption that the interface is semi-coherent, i.e., $\frac{c^+}{c^-} \approx 1$. 
 
Now, we compute the displacement gradients on the elastic and plastic regions for $u^\pm$. 
In the elastic region, enforcing a perfect matching between the top and the bottom lattices we get the relation 
$$
\frac{1+m_{\mathrm e}^+}{1+m_{\mathrm e}^-} = \frac{c^-}{c^+}.
$$ 
Linearizing such a relation around $c^+ = c^-$  and hence $m_{\mathrm e}^\pm = 0$  we get
\begin{equation}\label{rel1}
m_{\mathrm e}^+ - m_{\mathrm e}^-= \frac{c^- - c^+}{c} =:m. 
\end{equation}
Here $m$ is a material constant representing the purely elastic strain  needed to produce a perfect lattice matching. Clearly,  $m$  may be distributed on $m_{\mathrm e}^\pm$ through an auxiliary parameter $\alpha_{\mathrm e}\in[0,1]$ such  that $m^+_{\mathrm e} = \alpha_{\mathrm e} m$, $m^-_{\mathrm e} = - (1-\alpha_{\mathrm e}) m$. 
This  condition can be also expressed in terms of $m^\pm_{\mathrm e}$ as
\begin{equation}\label{rel11}
(1-\alpha_{\mathrm e}) m^+_{\mathrm e} = - \alpha_{\mathrm e}  m^-_{\mathrm e}.
\end{equation}  

Let us focus now on the plastic region. We assume that  each edge dislocation corresponds to a region whose length is given by few lattice spacings where large deformations take place. In such a region, we deform a bottom string of $K$ atoms in order that its extremal atoms match the corresponding top extremal atoms of a string of $K+1$ atoms. The parameter $K$ represents the length of the core region in terms of multiples of the lattice spacing. In fact, while the optimal profile around an edge dislocation has not compact support, the analysis  by van der Merwe and Peierls-Nabarro shows that, up to a small tail, the profile is essentially concentrated on the scale of the lattice spacing. Therefore, in our simplified model we assume  that  the profile is supported on a core region of size between $K c$ and $(K+1) c$ for some $K\in\N$. Moreover, the specific choice of $K$ and the sharp profile of the interface will play no role in our analysis, since they affect the asymptotic behavior of the energy density only by lower order terms. Hence,  without loss of generality, we  fix   $K=2$ and  the profile of the interface to be affine. 

The picture is the following: Consider three consecutive atoms on the top layer lying on the top of two consecutive atoms of the bottom layer (so that the central atom on the top is equidistant to the two atoms at the bottom); enforcing that the two external atoms on the top perfectly match the two atoms on the bottom, we get the  following 
relation 
\begin{equation}\label{nolin}
\frac{1+m^+_{\mathrm p}}{1+m^-_{\mathrm p}} = \frac{c^-}{2c^+}.
\end{equation}
As  done in \eqref{rel11} for the elastic strains,  we enforce that 
\begin{equation}\label{nolin2}
(1-\alpha_{\mathrm p})m^+_{\mathrm p}=-\alpha_{\mathrm p} m^-_{\mathrm p}
\qquad\text{ for some  }
\alpha_{\mathrm p} \in [0,1].
\end{equation}
Then, 
taking  $c^\pm=c$ in  \eqref{nolin}  and using \eqref{nolin2},  we get
\begin{equation}\label{rel2}
m^+ _{\mathrm p}- m^-_{\mathrm p}=  - \frac{1}{\alpha_{\mathrm p} + 1} \, . 
\end{equation}
Notice that $\alpha_{\mathrm p}=1$ corresponds to  configurations  where the bottom layer is rigid, and gives back $m^+ _{\mathrm p}- m^-_{\mathrm p}= -\frac 12$ which is consistent with the analysis done in \cite{FPS}. Set $u:= u^+ - u^-$;
taking into account the relations \eqref{rel1} and \eqref{rel2}, we have   that 
\begin{equation}\label{oran}
u '\in\Big\{m, - \frac{1}{\alpha_{\mathrm p} + 1}  \Big\}\, .
\end{equation}

Now we introduce the notion of core radius $\e>0$ in our model. To this purpose, we assume that, in a reference configuration,  the plastic region (namely the region where $u' = - \frac{1}{\alpha_{\mathrm p} + 1}$)  is given by the union of disjoint intervals of size $\e$;
such a parameter represents the core radius of the dislocation, and is 
determined by the following reasoning: Set $b^+ := -m_{\mathrm p}^+ \e, \, b^-:= m_{\mathrm p}^- \e$; such quantities represent fractional top and bottom portions of the Burgers vector, so that their sum is equal to $|b|:=c$. In view of \eqref{rel2} we deduce $\e= c(\alpha_{\mathrm p} +1)$.
   
Finally, we enforce that the top elastic and plastic regions  agree with the bottom ones,  and that $u^+$ and $u^-$ are periodic functions (a priori with arbitrarily, possibly different, periods). Let $T>0$ and let $l_{\mathrm e}(T)$ and $l_{\mathrm p}(T)$ be the lengths of the elastic and plastic regions in the interval $[0,T]$. Then, in the limit as $T\to +\infty$ we easily deduce 
$$
-\frac{m_{\mathrm p}^\pm}{m_{\mathrm e}^\pm}= \lim_{T\to +\infty }\frac{l_{\mathrm e}(T)}{l_{\mathrm p}(T)}. 
$$
(Periodicity  ensures that the above limit exists.) 
Therefore, by \eqref{rel11} and \eqref{nolin2}, we get
$$
\frac{\alpha_{\mathrm p}}{1-\alpha_{\mathrm p} }= - \frac{m_{\mathrm p}^+}{m_{\mathrm p}^-} = - \frac{m_{\mathrm e}^+}{m_{\mathrm e}^-} = \frac{\alpha_{\mathrm e}}{1-\alpha_{\mathrm e}},
$$
and hence $\alpha_{\mathrm p}\equiv\alpha_{\mathrm e}=:\alpha$.
Notice that, up to translations of the reference top and bottom lattices, we may always assume that    $u^+(\bar x)=u^-(\bar x)=0$ at some point $\bar x$.  Then, by \eqref{rel11}, \eqref{nolin2} and \eqref{oran}, we have 
$$
u^+ = \alpha u, \qquad\qquad u^- = -(1-\alpha)u
$$
for some $T$-periodic function $u$ with  

$$
\frac{1}{m} u \in \A^{T, \frac{1}{(\alpha + 1)m} ,(\alpha+1)c},
$$
and $T$ of the form \eqref{formT} (for $\Lambda= \frac{1}{(\alpha + 1)m}$ and $\delta=(\alpha+1)c$).
Setting $E^{\mathrm{tot}}:= E^{\el}(U^+; (0,T)\times \R^+)  + E^{\el}(U^-; (0,T)\times \R^-)$, by  \eqref{ff2} we have
\begin{equation}\label{ff3}
E^{\mathrm{tot}} =
T\F^T_{\frac 1 2}(u)  \left(  \alpha^2   \frac{G^+}{2\pi (1-\nu^+)}   + (1-\alpha)^2 \frac{G^-}{2\pi (1-\nu^-)}\right) .
\end{equation}
Minimizing $E^{\mathrm{tot}}$ with respect to $\alpha$ yields
\begin{equation}\label{amin}
\alpha_{\min}=
\frac
{
G^-   (1-\nu^+)
}
{
G^+  (1-\nu^-)      + G^-   (1-\nu^+)}\, .
\end{equation}
Recalling Theorem \ref{mainthm},
minimizing \eqref{ff3} with respect to the functions $u$ such that $\frac{1}{m} u \in \A^{T,\frac{1}{(\alpha + 1)m} ,(\alpha+1)c}$ we get that the minimizer $u_{\min}$ is such that      $\frac{1}{m} u_{\min}$ 
is (up to translations) as in \eqref{defmin}, again  with $\Lambda$ 
replaced by $\frac{1}{(\alpha+1)m}$ and $\delta$ replaced by $(\alpha+1)c$. 

Let $v(x):= \frac{1}{c} u_{\min}(\frac{c}{m} x)$.
Then, 
$v$ is a minimizer of $\F^{T\frac m c}_{\frac 12}$ in $\A^{T\frac m c,\delta^{-1},\delta}$ for $\delta:= (\alpha+1)m$.

 Then, by \eqref{ff3}, Theorem \ref{pers>}, and \eqref{amin}, we get
\begin{multline*}
\frac{1}{T}E^{\mathrm{tot}} =  \left(  \alpha_{\min}^2   \frac{G^+}{2\pi (1-\nu^+)}   + (1-\alpha_{\min})^2 \frac{G^-}{2\pi (1-\nu^-)}\right) c \,m\, \F^{T\frac{m}{c}}_{\frac 1 2}(v) 
\\
=\left(  \alpha_{\min}^2   \frac{G^+}{2\pi (1-\nu^+)}   + (1-\alpha_{\min})^2 \frac{G^-}{2\pi (1-\nu^-)}\right) c \,m\,  \log \frac 1 m+ \textrm{l.o.t.}
\\
=\frac{1}{2\pi}  \frac{G^+G^-}{G^- (1-\nu^+)+G^+(1-\nu^-)}    (c^--c^+)\,  \log \frac c {c^- - c^+}+ \textrm{l.o.t.} \, .
\end{multline*}

Writing the surface energy in terms of the length of the Burgers vector $|b|=c$ and of the distance between dislocations $\Delta:= \frac{c}{m} + \e =\frac{c^2}{c^- - c^+} + \e$, we get 
\begin{multline}\label{gfvdm}
\frac{1}{T}E^{\mathrm{tot}}=  \frac{1}{2\pi}  \frac{G^+G^-}{G^- (1-\nu^+)+G^+(1-\nu^-)}   |b|^2 \frac{1}{\Delta-\e} \log \Big(\frac{\Delta-\e}{|b|}\Big)+ \textrm{l.o.t.}\\
=\frac{1}{2\pi}  \frac{G^+G^-}{G^- (1-\nu^+)+G^+(1-\nu^-)}   |b|^2 \frac{1}{\Delta} \log \Big(\frac{\Delta}{|b|}\Big)+ \textrm{l.o.t.},
\end{multline}
where we have used that  in the semi-coherent limit $\Delta \approx \frac{c}{m}$.
Formula \eqref{gfvdm} is consistent with the analysis of van der Merwe \cite[formula (26)]{VdM} for $G^+ = G^-$, $\nu^+ = \nu^-$ (and hence $\alpha_{\min} = \frac 12$) and it represents the analogous of the Shockley-Read \cite{RS} formula for misfit dislocations.

\section{Conclusions and perspectives}\label{Stre}

We have considered two slope functions with fixed length scale; we have proven periodicity of minimal configurations for the $s$-fractional Gagliardo seminorms with $s\in(0,1)$; then, we have considered  also  the extremal cases $s=0$ and $s=1$, corresponding to the $L^2$ norm and to the $\dot H^1$-seminorm, respectively. We have also provided the asymptotic 
behavior of the energy density as $\delta\to 0$, $\Lambda=\frac{1}{\delta}$. For $s=\frac 12 $ such a result is particularly relevant to compute surface energy densities stored at semi-coherent interfaces. Specifically, we have computed the surface energy induced by misfit dislocations between two linearly elastic half planes filled by semi-coherent square lattices, generalizing the van der Merwe analysis to the case where the two half planes have different elastic moduli. The main novelty of such a result is that we prove, rather than assume, the optimality of the periodic distribution of misfit dislocations.

\vskip3pt
\paragraph{\bf Generalized $s$-fractional seminorms for all values of $s$}
A natural follow-up of our analysis would be to consider the Gagliardo $s$-seminorms when $s$ is negative. For instance, for $s=-1$ the energy functional takes the form
$$
\F_{-1}^{T}(u):=\frac{1}{2T}\inf_{\newatop{v\in L^2(0,T)}{v'=u}}\|v\|^2_{L^2(0,T)}.
$$
For negative values of $s$, another possibility is to formally (or by means of $\Gamma$-convergence) renormalize the integrand in \eqref{defF} by expanding the square $|u(x)-u(y)|^2$ and  subtracting  the infinite $L^2$ contributions; similar renormalization procedures have been considered in \cite{DNP, CDKNP}, also for the critical case $s=0$. The resulting energy functional is, in general, non-positive and takes the form of a Riesz (or weighted $XY$) type functional, formally defined (on a suitable functional space to be specified) by
\begin{equation*}
\F_{s}^{T}(u):=-\frac{1}{T}\int_{0}^{T}\ud x\int_{\R}\frac{u(y)u(x)}{|x-y|^{1+2s}}\ud y.
\end{equation*}
The first variation of such  an energy gives back nonlocal differential operators that could be understood as generalized fractional Laplacians for negative $s$, and deserve, in our opinion, further investigation. 
On the other hand,  from our analysis, and formally writing $\|u\|_{\dot H^s}=\| u'\|_{\dot{H}^{s-1}}$,  it seems that a natural range for the parameter $s$ is given by  $0<s< \frac 32$, $s=1$ being a critical value where a renormalization procedure is needed. Another renormalization of the energy could take place and be analyzed at $s = \frac 32$, in order to extend the analysis to all positive values of $s$ (we refer to \cite{DKP} where it is shown that supercritical fractional seminorms of characteristic functions of sets behave, after suitable renormalization procedures, as the Euclidean perimeter of the sets). 

\vskip3pt
\paragraph{\bf Phase field models}
In this paper we have considered rigid affine  profiles where $u$ has a constant negative slope
$-\Lambda$ on essentially disjoint $\delta$-intervals of the type $(x_0-\frac\delta 2,x_0+\frac\delta 2)$. In fact, any given symmetric profile could be considered. More precisely, we can assume that $u(\cdot)= \psi(\cdot-x_0)$ on  $\delta$-intervals  of the type $(x_0-\frac\delta 2,x_0+\frac\delta 2)$,  where $\psi:(-\frac{\delta}{2}, \frac{\delta}{2}) \to \R$ is a given (smooth enough) non-increasing odd function. Periodicity of minimizers as in Theorem \ref{mainthm} would follow as well with minor changes. Such a  generalization would confirm that \eqref{gfvdm} holds true also prescribing more realistic (still with compact support)  plastic profiles for the traces of the displacements in the core regions of the misfit dislocations. 
   
One could also consider less rigid models, where the length scale is not quantized {\it a priori}. 
In this respect, a first generalization of our model would consist in replacing the quantization constraint in the class of admissible configurations with a minimality condition on the length scale where the negative derivative is assumed.

A natural choice, commonly used in literature, would be to avoid any length scale constraint in the class of admissible configurations,
 but to enforce it by an extra small parameter $\e>0$. In  what follows we consider for simplicity one-periodic functions, i.e.,  we fix $T=1$. A basic energy functional could be 
$$
\G^{\e}_s(u):= \frac{1}{2}\int_{0}^{1}\ud x\int_{\R}\frac{|u(y)-u(x)|^2}{|x-y|^{1+2s}}\ud y + \e \sharp (S_{u'}\cap (0,1]),
$$ 
to be minimized among two slope functions without further restrictions. 
Furthermore, the transition between the two slopes could be relaxed considering a Modica-Mortola penalization such as
$$
\bar\G^{\e}_s(u):=  \frac{1}{2}\int_{0}^{1}\ud x\int_{\R}\frac{|u(y)-u(x)|^2}{|x-y|^{1+2s}}\ud y + \e^2 \int_0^1 |u''|^2 \ud x +  \int_0^1 W(u') \ud x\; , 
$$
where $W$ is a double well potential with minima at the two desired slopes. This has been (up to minor differences) done for the $L^2$ norm, corresponding to $s=0$, in  \cite{M93}  for two symmetric opposite slopes and in \cite{RW} without any symmetry assumption; furthermore, the case  $s= \frac 12$ has been done in \cite{GM} for two symmetric opposite slopes.

 The case of two different slopes and $s= \frac12$ has been already proposed   in \cite{FPS} as a model for misfit dislocations. There, uniform distribution of dislocations, rather that their periodicity, has been proved in the semi-coherent limit. In such a paper, another model 
closely related to the Peierls-Nabarro formalism of dislocations has been proposed; neglecting all the physical and material parameters related to dislocations and using instead the set of parameters $(1,\Lambda, \delta, s)$, according with those appearing in Section \ref{Suno}, the energy functional reads as 
$$
\mathcal{PN}^{\delta,\Lambda}_s(u):=\frac{1}{2} \int_{0}^{1}\ud x\int_{\R}\frac{|u(y)-u(x)|^2}{|x-y|^{1+2s}}\ud y + \frac{1}{\delta^2} \int_0^1 \mathrm{dist}^2(u(x) -x,\delta (1+\Lambda) \Z) \ud x.
$$    
Such a model differs from  classical Modica-Mortola functionals since the $\dot{H}^1$-term is replaced by the $s$-fractional seminorm and the potential has infinite periodic wells; such a variant was considered, for $s=\frac 12$,  in \cite{FG} in connection with the energy induced by dislocations  at coherent interfaces; a second novelty of $\mathcal{PN}^{\delta,\Lambda}_s(u)$ is due to the presence of the eigenstrain, proposed in \cite{FPS}, related to the presence of semi-coherent intefaces and possibly enforcing periodic distribution of dislocations.   
In fact, we expect that minimizers of such an energy functional exhibit the same behavior of those of $\F^1_s$ in $\A^{1,\Lambda,\delta}$. The analysis of $\mathcal{PN}_{s}^{\delta,\Lambda}$ and its material-dependent variants more closely related to specific dislocation misfit models (for $s=\frac12$) deserve, in our opinion,  further investigations.



\end{document}